\topskip \setlength{\parindent}{0pt} \setlength{\parskip}{3pt plus
\numberwithin{equation}{section}
\newtheorem{theorem}{Theorem}[section]
\newtheorem{corollary}[theorem]{Corollary}
\begin{document}
\title[Bayes Blind Spot]{A Further Look at the Bayes Blind Spot}

\author[M. Shattuck]{Mark Shattuck}
\address{Department of Mathematics, University of Tennessee,
37996 Knoxville, TN, USA}
\email{mshattuc@utk.edu}

\author[C. Wagner]{Carl Wagner}
\address{Department of Mathematics, University of Tennessee,
37996 Knoxville, TN, USA}
\email{cwagner@tennessee.edu}

\maketitle
\thispagestyle{empty}

\begin{abstract}
\footnotesize{Gyenis and R\'{e}dei have demonstrated that any prior $p$ on a finite algebra, however chosen, severely restricts the set of posteriors accessible from $p$ by Jeffrey conditioning on a nontrivial partition.  Their demonstration involves showing that the set of posteriors not accessible from $p$ in this way (which they call the \emph{Bayes blind spot of} $p$) is large with respect to three common measures of size, namely, having cardinality $c$, (normalized) Lebesgue measure $1$, and Baire second category with respect to a natural topology.  In the present paper, we establish analogous results for probability measures defined on any infinite sigma algebra of subsets of a denumerably infinite set.  However, we have needed to employ distinctly different approaches to determine the cardinality, and especially, the topological and measure-theoretic sizes of the Bayes blind spot in the infinite case.  Interestingly, all of the results that we establish for a single prior $p$ continue to hold for the intersection of the Bayes blind spots of countably many priors.  This leads us to conjecture that Bayesian learning itself might be just as culpable as the limitations imposed by priors in enabling the existence of large Bayes blind spots.}\normalsize
\end{abstract}

\footnotesize{\emph{Keywords:} Bayes blind spot, Jeffrey conditioning, Baire category, Bayesian learning }\normalsize

\section{Introduction}

In a pair of groundbreaking papers, Gyenis and  R\'{e}dei (2017, 2021) initiated a study of the following important epistemological question: Given a prior probability measure $p$, and a family $R$ of possible methods for revising $p$ in response to new evidence, how large is the set of probability measures $q$ that are \emph{inaccessible} from $p$ using any of the stipulated revision methods?  The authors call the set of probability measures that are inaccessible from $p$ its \emph{Bayes blind spot}, denoted by $BS(p)$.  In their foundational 2017 paper, Gyenis and R\'{e}dei (henceforth, G\&R) situate the accessibility problem in the broadly comprehensive setting in which revision is conceptualized in terms of conditional expectation.  In their 2021 paper, they explore in detail the case in which probability measures are defined on a \emph{finite} algebra $\bf{A}$ of subsets of an arbitrary set $Y$,$^1$ and $R$ consists of all possible ways of revising $p$ by Jeffrey conditioning on any (necessarily finite) partition of $Y$ consisting of members of $\bf{A}$, at least one of which has at least two elements.  They prove in this case that the Bayes blind spot is large with respect to three common measures of size, namely, having cardinality $c$, (normalized) Lebesgue measure $1$, and Baire second category with respect to a natural topology.  They observe that their result emphasizes the heretofore insufficiently recognized crucial role of priors in Bayesian learning, and demonstrates that any prior, however chosen, severely restricts the set of posteriors that are in principle derivable from that prior by a single application of Jeffrey conditioning.$^2$

In the present paper, we establish analogous results for probability measures defined on any infinite sigma algebra of subsets of a denumerably infinite set $Y$.$^3$  However, we have needed to employ distinctly different approaches to determine the cardinality, and the topological and measure-theoretic sizes of the Bayes blind spot in the infinite case.  Interestingly, all of the results that we establish for a single prior $p$ continue to hold for the intersection of the Bayes blind spots of countably many priors.  This leads us to conjecture that Bayesian learning itself might be just as culpable as the limitations imposed by priors in enabling the existence of large Bayes blind spots.  Before presenting our results, however, we offer a brief review of some essential features of Jeffrey conditioning.

\section{A primer on Jeffrey conditioning}

Suppose $X$ is \emph{any} set of possible states of the world, ${\bf A}$ is a sigma algebra of subsets of $X$, and ${\bf E}=\{E_i\}$ is a countable partition of $X$ with each $E_i \in {\bf A}$. It will be convenient throughout this paper to assume that every prior $p$ is a \emph{strictly coherent} probability measure on ${\bf A}$, in the sense that $p(A)>0$ for all nonempty $A \in \bf{A}$.$^4$ A probability measure $q$ on $\bf{A}$ is said to come from $p$ by \emph{Jeffrey conditioning} (henceforth, JC) on $\textbf{E}$ if
\begin{equation}\label{JC}
q(A)=\sum_iq(E_i)p(A|E_i), \quad \text{for all } A \in \bf{A}.
\end{equation}

\begin{theorem}\label{thJC1}
Formula \eqref{JC} is equivalent to the rigidity condition
\begin{equation}\label{JC2}
q(A|E_i)=p(A|E_i), \text{ for each } i \text { such that } q(E_i)>0 \text{ and all } A\in {\bf A},
\end{equation}
and, if $X$ is countable and ${\bf A}=2^{X}$, to the following identities:
\begin{equation}\label{JC3}
\text{For each } i \text{ and all } x,x' \in E_i,~\frac{q(x)}{p(x)}=\frac{q(x')}{p(x')},
\end{equation}
where we slightly abuse notation by writing, for example, $q(x)$, instead of $q(\{x\})$.
\end{theorem}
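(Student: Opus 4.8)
The plan is to prove the two equivalences separately: first $\eqref{JC}\Leftrightarrow\eqref{JC2}$ in the general measure-theoretic setting, and then $\eqref{JC2}\Leftrightarrow\eqref{JC3}$ under the extra hypothesis that $X$ is countable and ${\bf A}=2^{X}$. Throughout, the assumed strict coherence of $p$ is what makes everything well posed: it guarantees $p(E_i)>0$ for every $i$ (and $p(x)>0$ for every $x$ in the countable case), so all the conditional probabilities and all the ratios $q(x)/p(x)$ appearing in the statement make sense.

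For $\eqref{JC}\Rightarrow\eqref{JC2}$ I would fix an index $i$ with $q(E_i)>0$ and apply \eqref{JC} with $A\cap E_i$ in place of $A$. Since the blocks $E_j$ are pairwise disjoint, $p(A\cap E_i\mid E_j)=0$ for $j\neq i$ while $p(A\cap E_i\mid E_i)=p(A\mid E_i)$, so the series on the right collapses to a single term, giving $q(A\cap E_i)=q(E_i)\,p(A\mid E_i)$; dividing by $q(E_i)$ yields \eqref{JC2}. For the converse $\eqref{JC2}\Rightarrow\eqref{JC}$, fix $A\in{\bf A}$ and use countable additivity of $q$ to write $q(A)=\sum_i q(A\cap E_i)$: when $q(E_i)>0$, \eqref{JC2} gives $q(A\cap E_i)=q(E_i)\,q(A\mid E_i)=q(E_i)\,p(A\mid E_i)$, and when $q(E_i)=0$ both $q(A\cap E_i)$ and the term $q(E_i)\,p(A\mid E_i)$ vanish; summing over $i$ recovers \eqref{JC}.

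For the countable case with ${\bf A}=2^{X}$, to get $\eqref{JC2}\Rightarrow\eqref{JC3}$ I would fix $i$ and distinguish cases: if $q(E_i)>0$, applying \eqref{JC2} with $A=\{x\}$ for $x\in E_i$ gives $q(x)/q(E_i)=p(x)/p(E_i)$, hence $q(x)/p(x)=q(E_i)/p(E_i)$, a value independent of $x\in E_i$; if $q(E_i)=0$, then $q(x)=0$ for all $x\in E_i$, so $q(x)/p(x)=0$ identically on $E_i$. In either case \eqref{JC3} holds. For $\eqref{JC3}\Rightarrow\eqref{JC2}$, fix $i$ with $q(E_i)>0$ and let $c_i$ be the common value of $q(x)/p(x)$ over $x\in E_i$; summing $q(x)=c_i\,p(x)$ over $x\in E_i$ shows $c_i=q(E_i)/p(E_i)$, and summing over $x\in A\cap E_i$ shows $q(A\cap E_i)=c_i\,p(A\cap E_i)$, so $q(A\mid E_i)=p(A\cap E_i)/p(E_i)=p(A\mid E_i)$, which is \eqref{JC2}.

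The argument uses no real ingenuity; the one point that requires care is the uniform handling of indices $i$ with $q(E_i)=0$ — such $i$ are excluded by fiat from \eqref{JC2}, contribute only a zero term to the series in \eqref{JC}, and force $q$ to vanish on $E_i$ so that \eqref{JC3} is trivially satisfied there. Tracking these degenerate blocks, together with the appeals to countable additivity of $q$ when passing between \eqref{JC} and \eqref{JC2} and to unconditional convergence of the nonnegative series $\sum_{x\in A\cap E_i}q(x)$ when rearranging sums in the countable case, is all that stands between this outline and a complete proof.
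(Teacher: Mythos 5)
Your proof is correct and complete. The paper itself supplies no argument for this theorem---it simply cites Jeffrey (1992, pp.~117--118)---so there is nothing in the text to compare against; your two-step decomposition (first \eqref{JC} $\Leftrightarrow$ \eqref{JC2} by evaluating \eqref{JC} at $A\cap E_i$ and using countable additivity of $q$ over the partition, then \eqref{JC2} $\Leftrightarrow$ \eqref{JC3} by taking $A=\{x\}$ and resumming) is the standard route, and your explicit handling of the degenerate blocks with $q(E_i)=0$, together with the observation that strict coherence of $p$ makes all the conditional probabilities and ratios well defined, is exactly the care the statement requires.
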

\begin{proof}
See Jeffrey (1992, pp.~117--118).
\end{proof}

The partition $\{\{x\}:x\in X\}$, in which each block is a singleton, is called the \emph{trivial partition of} $X$.  All other partitions are \emph{nontrivial}. It is easy to see that if $X$ is countable and $p$ is strictly coherent on $2^X$, then \emph{every} probability measure $q$ on $2^X$ comes from $p$ by JC on the trivial partition of $X$.
In this case, JC amounts to \emph{total reassessment}, which is why we restrict consideration to JC on nontrivial partitions.

The following theorem is an immediate consequence of Theorem \ref{thJC1}.

\begin{theorem}\label{thJC2}
If $X$ is countable, $p$ and $q$ are probability measures on $2^X$, and $p$ is strictly coherent, then $q$ comes from $p$ by JC on a nontrivial partition $\bf{E}$ of $X$ if and only if there exist distinct $x$ and $x'$ in $X$ such that $q(x)/p(x)=q(x')/p(x')$.
\end{theorem}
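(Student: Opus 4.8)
The plan is to derive Theorem~\ref{thJC2} directly from the equivalence of \eqref{JC} and \eqref{JC3} established in Theorem~\ref{thJC1}, together with the elementary observation that a partition $\mathbf{E}$ of $X$ is nontrivial precisely when at least one of its blocks contains two or more elements. Since $p$ is strictly coherent, $p(x)>0$ for every $x\in X$, so all of the ratios $q(x)/p(x)$ that appear below are well defined; this is the only place the strict coherence hypothesis is needed.

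For the forward implication, suppose $q$ comes from $p$ by JC on a nontrivial partition $\mathbf{E}=\{E_i\}$. By Theorem~\ref{thJC1} (using $X$ countable and $\mathbf{A}=2^X$), condition \eqref{JC3} holds, i.e., $q(x)/p(x)=q(x')/p(x')$ whenever $x$ and $x'$ lie in a common block. Because $\mathbf{E}$ is nontrivial, some block $E_{i_0}$ contains two distinct elements $x$ and $x'$, and these then satisfy $q(x)/p(x)=q(x')/p(x')$, which is exactly what is claimed.

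For the reverse implication, suppose $x\neq x'$ are elements of $X$ with $q(x)/p(x)=q(x')/p(x')$. I would take $\mathbf{E}$ to be the partition of $X$ whose only non-singleton block is $\{x,x'\}$, with all remaining blocks being the singletons $\{y\}$, $y\in X\setminus\{x,x'\}$. This $\mathbf{E}$ is nontrivial, and \eqref{JC3} holds for it: the condition is vacuous on each singleton block, and on the block $\{x,x'\}$ it reduces to the hypothesis $q(x)/p(x)=q(x')/p(x')$. Invoking Theorem~\ref{thJC1} once more, $q$ comes from $p$ by JC on $\mathbf{E}$, completing the argument.

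There is no genuine obstacle here — as the text remarks, the statement is an immediate consequence of Theorem~\ref{thJC1}. The only points deserving a moment's care are the explicit appeal to strict coherence so that the ratios are meaningful, and the observation that \eqref{JC3} places no constraint on singleton blocks, which is what makes the "single doubleton" partition constructed above legitimate.
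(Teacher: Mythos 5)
Your proof is correct, and the necessity half is identical to the paper's: both extract a block with two elements from the nontrivial partition and apply \eqref{JC3}. The two arguments differ only in the sufficiency half, in the choice of witness partition. You take the \emph{finest} admissible witness: the trivial partition modified by merging $\{x\}$ and $\{x'\}$ into a single doubleton block, so that \eqref{JC3} is vacuous on every singleton and reduces to the hypothesis on $\{x,x'\}$. The paper instead takes the \emph{coarsest} witness, namely the level sets $E_r=\{x\in X: q(x)/p(x)=r\}$ of the map $x\mapsto q(x)/p(x)$, whose non-injectivity is exactly the hypothesis. Both partitions are countable and nontrivial, and both satisfy \eqref{JC3}, so either choice completes the proof via Theorem \ref{thJC1}. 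Your construction is marginally more economical; the paper's buys something extra, namely the observation recorded in the remark immediately following its proof that the level-set partition is the coarsest partition of $X$ on which $q$ comes from $p$ by JC (in the sense of Diaconis--Zabell and van Fraassen), a fact your doubleton partition does not exhibit. Your explicit flagging of where strict coherence is used (only to make the ratios $q(x)/p(x)$ well defined) is a point the paper leaves implicit.
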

\begin{proof}
Necessity. Since at least one block of a nontrivial partition contains at least two elements of $X$, this follows from \eqref{JC3} above.  Sufficiency. By assumption, the map $x \mapsto q(x)/p(x)$ from $X$ to $[0,\infty)$ is not injective.  For each member $r$ in the range $R$ of this map, let $E_r:=\{x\in X:q(x)/p(x)=r\}$.  Clearly, ${\bf E}:=\{E_r:r \in R\}$ is a countable, nontrivial partition of $X$, and for each $r \in R$, if $x,x' \in E_r$, then $q(x)/p(x)=q(x')/p(x')=r$.  So by Theorem \ref{thJC1}, $q$ comes from $p$ by JC.
\end{proof}

\emph{Remark:} The partition ${\bf E}$ introduced above in the proof of sufficiency is the \emph{coarsest partition}$^5$ of $X$ on which $q$ comes from $p$ by JC.  See Diaconis and Zabell (1982, 824) and van Fraassen (1980). \medskip

From Theorem \ref{thJC2}, the following characterization of $BS(p)$ is immediate.

\begin{corollary}\label{corJC}
The probability measure $q$ belongs to $BS(p)$ if and only if the ratios $q(x)/p(x)$ are distinct for all $x \in X$.
\end{corollary}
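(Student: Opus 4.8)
The plan is to obtain the statement as the straightforward logical contrapositive of Theorem \ref{thJC2}. By definition, $q \in BS(p)$ means precisely that $q$ is \emph{not} accessible from $p$ by JC on any nontrivial partition of $X$. Working under the standing hypotheses of Theorem \ref{thJC2} ($X$ countable, $p$ and $q$ probability measures on $2^X$, and $p$ strictly coherent), that theorem tells us that $q$ \emph{is} so accessible if and only if there exist distinct $x, x' \in X$ with $q(x)/p(x) = q(x')/p(x')$, i.e., if and only if the map $x \mapsto q(x)/p(x)$ fails to be injective. Negating both sides of this biconditional gives at once that $q \in BS(p)$ if and only if $x \mapsto q(x)/p(x)$ is injective, which is exactly the assertion that the ratios $q(x)/p(x)$ are pairwise distinct for all $x \in X$.

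The only point meriting a word of care is that each ratio $q(x)/p(x)$ is a well-defined real number; this is ensured by strict coherence of $p$, since every singleton $\{x\}$ is a nonempty member of $2^X$ and hence $p(x) = p(\{x\}) > 0$. Beyond noting this, I expect no real obstacle: the corollary is a one-line deduction from Theorem \ref{thJC2}, and its role is mainly to package the accessibility criterion in the form most convenient for the size computations to follow.
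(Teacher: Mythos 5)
Your proof is correct and matches the paper's approach: the paper likewise presents the corollary as an immediate consequence of Theorem \ref{thJC2}, obtained by negating both sides of its biconditional. Your added remark that strict coherence guarantees $p(x)>0$, so the ratios are well defined, is a sensible (if implicit in the paper) observation.
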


In the following sections, the above corollary will be invoked repeatedly in determining the size of $BS(p)$ in terms of its cardinality, its Baire category and an appropriately constructed measure.

\section{The cardinality of $BS(p)$}

Let $X=\{x_i\}_{i\geq1}$ be a countable set with $|X|\geq 2$. Since all probability measures $q$ on $2^X$ are completely determined by the values $q_i:=q(x_i)$, we may focus attention on the associated probability distributions (i.e., mass functions), denoted by ${\bf q}=(q_i)_{i\geq1}$.  We prove here a more general result concerning the cardinality of what we now denote by $BS(\bf{p})$.  Given a nonempty set ${\bf P}$ of probability distributions on $X$, let $BS({\bf P})$ be the Bayes blind spot of ${\bf P}$ defined as the intersection of $BS({\bf p})$ for all ${\bf p} \in {\bf P}$.

\begin{theorem}\label{Cardth}
Let ${\bf P}$ be a nonempty countable set of probability distributions on a countable set $X$ with $|X|\geq 2$, where all probabilities in each distribution are positive.  Then the cardinality of $BS({\bf P})$ is equal to $c$.
\end{theorem}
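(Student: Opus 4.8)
The plan is to combine Corollary~\ref{corJC} with a diagonal construction of continuum many distributions. By Corollary~\ref{corJC}, $\mathbf q\in BS(\mathbf p)$ exactly when $i\mapsto q_i/p_i$ is injective, so $\mathbf q\in BS(\mathbf P)=\bigcap_{\mathbf p\in\mathbf P}BS(\mathbf p)$ iff for every $\mathbf p^{(k)}\in\mathbf P$ the map $i\mapsto q_i/p^{(k)}_i$ is injective. The inequality $|BS(\mathbf P)|\le c$ is immediate, since every probability distribution on the countable set $X$ is an element of $[0,1]^{X}$, a set of cardinality $c^{\aleph_0}=c$. Thus the whole task is to exhibit $c$ distinct distributions lying in $BS(\mathbf P)$.

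Consider first the principal case in which $X=\{x_i\}_{i\ge1}$ is (denumerably) infinite, and enumerate $\mathbf P=\{\mathbf p^{(k)}\}_{k\ge1}$ (a finite $\mathbf P$ being handled identically, with repetitions allowed). I would build a full binary tree of distributions as follows. Put $a_1:=1$. Given positive reals $a_1,\dots,a_{i-1}$, the ``forbidden set''
\[
F_i:=\bigl\{\,a_j\,p^{(k)}_i/p^{(k)}_j:\ 1\le j<i,\ k\ge1\,\bigr\}
\]
is countable while the interval $(0,2^{-i})$ is not, so one may choose two \emph{distinct} reals $b^{0}_i\ne b^{1}_i$ in $(0,2^{-i})\setminus F_i$; for a bit $\varepsilon$ the two extensions are obtained by setting $a_i:=b^{\varepsilon}_i$. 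Each infinite binary sequence $\sigma=(\varepsilon_i)_{i\ge2}$ then yields a sequence $\mathbf a^{\sigma}=(a^{\sigma}_i)_{i\ge1}$ with $\sum_i a^{\sigma}_i\le 1+\sum_{i\ge2}2^{-i}<\infty$, and I define $\mathbf q^{\sigma}:=\mathbf a^{\sigma}/S^{\sigma}$ with $S^{\sigma}:=\sum_i a^{\sigma}_i$; this is a probability distribution on $X$, with every coordinate positive.

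It remains to verify two short claims. First, $\mathbf q^{\sigma}\in BS(\mathbf P)$ for every $\sigma$: fix $k$ and let $i<j$, so $j\ge2$; at stage $j$ we deliberately kept $a^{\sigma}_j$ out of $F_j$, which contains $a^{\sigma}_i p^{(k)}_j/p^{(k)}_i$, so $a^{\sigma}_j/p^{(k)}_j\ne a^{\sigma}_i/p^{(k)}_i$. Hence $i\mapsto a^{\sigma}_i/p^{(k)}_i$ is injective, and since $q^{\sigma}_i/p^{(k)}_i=(1/S^{\sigma})\,(a^{\sigma}_i/p^{(k)}_i)$ differs from it only by a fixed positive factor, $i\mapsto q^{\sigma}_i/p^{(k)}_i$ is injective too; as $k$ was arbitrary, $\mathbf q^{\sigma}\in BS(\mathbf P)$. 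Second, $\sigma\ne\tau$ implies $\mathbf q^{\sigma}\ne\mathbf q^{\tau}$: if $\sigma,\tau$ first differ at index $i_0$ then $a^{\sigma}_j=a^{\tau}_j$ for $j<i_0$ (hence $F_{i_0}$ and the pair $\{b^{0}_{i_0},b^{1}_{i_0}\}$ coincide for the two branches) while $a^{\sigma}_{i_0}\ne a^{\tau}_{i_0}$; were $\mathbf q^{\sigma}=\mathbf q^{\tau}$, we would have $\mathbf a^{\tau}=(S^{\tau}/S^{\sigma})\mathbf a^{\sigma}$, and the first coordinate ($a^{\sigma}_1=a^{\tau}_1=1$) would force $S^{\sigma}=S^{\tau}$ and hence $\mathbf a^{\sigma}=\mathbf a^{\tau}$, a contradiction. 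Since the binary tree has $2^{\aleph_0}=c$ branches, $|BS(\mathbf P)|\ge c$, and with the upper bound this gives $|BS(\mathbf P)|=c$. When $X$ is finite, say $|X|=n\ge2$, I would instead argue directly that $BS(\mathbf P)$ is the $(n-1)$-simplex $\Delta$ with the countably many proper affine slices $\{\mathbf q\in\Delta:q_ip^{(k)}_j=q_jp^{(k)}_i\}$ removed; a line segment in the interior of $\Delta$ in general position lies in none of these slices, hence meets each in at most one point, and so already contributes $c$ points to $BS(\mathbf P)$.

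The main obstacle is the simultaneous diagonalization: one must keep $i\mapsto q_i/p^{(k)}_i$ injective for \emph{all} $\mathbf p^{(k)}\in\mathbf P$ at once, yet still produce continuum many genuinely distinct distributions. The construction overcomes this because at each finite stage only countably many real numbers are forbidden---one for every earlier coordinate paired with every prior---leaving an entire interval, and in particular two points, from which to branch; pinning down the first coordinate is the inexpensive device that prevents normalization from collapsing distinct branches. Everything else is routine.
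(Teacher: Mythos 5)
Your proof is correct. The core diagonalization---choosing the $i$-th coordinate in $(0,2^{-i})$ while excluding the countably many values that would create a ratio coincidence with an earlier coordinate for some prior $\mathbf p^{(k)}$---is exactly the paper's construction (there the coordinates are called $m_i$). Where you differ is in how the lower bound $|BS(\mathbf P)|\ge c$ is extracted: the paper first builds a \emph{single} $\mathbf q\in BS(\mathbf P)$ by this method and then produces continuum many members via the one-parameter perturbation $\mathbf q_\delta=(q_1+\delta,q_2-\delta,q_3,q_4,\ldots)$, observing that only countably many $\delta\in(0,\epsilon)$ spoil membership for some $\mathbf p^{(k)}$; you instead branch the diagonalization into a full binary tree, choosing two admissible values at each stage, and then verify that normalization does not collapse distinct branches (your device of pinning $a_1=1$ handles this cleanly, a point the paper's argument sidesteps because it perturbs an already-normalized distribution). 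Both routes are sound: the tree argument is more self-contained, needing no separate perturbation step and no separate counting of bad parameters, while the paper's perturbation is shorter once one element of $BS(\mathbf P)$ is in hand. Your explicit treatment of finite $X$ via a line segment in general position in the simplex is also correct and fills in what the paper dismisses as ``simple modifications.''
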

\begin{proof}
We treat only the case when $X$ is denumerably infinite, as the simple modifications required to accommodate the finite case of $X$ will be apparent.
We show first that $BS({\bf P})$ is nonempty.  Let ${\bf P}=\{{\bf p}^{(1)},{\bf p}^{(2)},\ldots\}$, where ${\bf p}^{(k)}=(p_i^{(k)})_{i\geq 1}$ with $\sum_{i\geq 1}p_i^{(k)}=1$ and $p_i^{(k)}>0$ for all $i$, for each $k \geq 1$.  Let $m_1$ be any positive real number. If $i \geq 2$, and the positive real numbers $m_1,\ldots,m_{i-1}$ have been chosen, choose $m_i$ such that (i) $0<m_i<2^{-i}$ and (ii) $\frac{m_i}{p_i^{(k)}}\neq \frac{m_{j}}{p_{j}^{(k)}}$ for all $1 \leq j \leq i-1$ and $k \geq 1$.  Note that such a sequence $(m_i)_{i\geq 1}$ can be constructed, since at each step, only a countable number of possible values for $m_i$ are being excluded from the (uncountable) set $(0,2^{-i})$.  Further, the series $\sum_{i\geq 1}m_i$ is convergent, with, say, $\sum_{i\geq 1}m_i=m$.  Define ${\bf q}=(q_i)_{i\geq1}$ by setting $q_i=\frac{m_i}{m}$.  Then the sequence $\frac{q_i}{p_i^{(k)}}$ for $i \geq 1$ has all its terms distinct for each $k \geq 1$, by construction, with $\sum_{i\geq 1}q_i=1$, so ${\bf q} \in BS({\bf P})$.

Let $\epsilon=\min\{1-q_1,q_2\}$.  Given $0<\delta<\epsilon$, let $s_\delta^{(k)}$ be the sequence defined by
$\frac{q_1+\delta}{p_1^{(k)}},\frac{q_2-\delta}{p_2^{(k)}},\frac{q_3}{p_3^{(k)}},\frac{q_4}{p_4^{(k)}},\ldots$ for each $k \geq 1$.  Note that ${\bf q} \in BS({\bf P})$ implies that there are only a countable number of values of $\delta$ for which $s_\delta^{(j)}$ fails to have all its terms distinct for some $j \geq 1$.  Then for all other values of $\delta$ in the interval $(0,\epsilon)$, we have $q_\delta=(q_1+\delta,q_2-\delta,q_3,q_4,\ldots) \in BS({\bf P})$.  Hence, $|BS({\bf p})| \geq c$, as it contains an uncountable subset.  On the other hand, $BS({\bf p})\subset [0,1]^{\mathbb{P}}$, where $\mathbb{P}=\{1,2,\ldots\}$, and so $|BS({\bf p})| \leq |[0,1]^{\mathbb{P}}|=c$, whence $|BS({\bf p})|=c$.
\end{proof}

\section{A topological analysis of $BS(p)$}

In this section and the next, $X$ represents a denumerably infinite set.  Given a real sequence ${\bf s}=(s_i)_{i\geq 1}$, the $\ell_1$-norm of ${\bf s}$ is defined as $\sum_{i\geq 1}|s_i|$, with the distance between $\bf{s}$ and ${\bf t}=(t_i)_{i\geq 1}$ given by $|{\bf s}-{\bf t}|=\sum_{i\geq 1}|s_i-t_i|$. A fundamental result from analysis states that the subset of the real sequences for which the $\ell^1$-norm is finite (i.e., $\sum_{i\geq 1}|s_i|$ converges) is a complete metric space (with metric $d$ defined as $d({\bf s},{\bf t})=|\bf{s}-\bf{t}|$); see, e.g., Friedman (1982, Theorem 3.2.3).  Let $S$ denote the set of sequences summing to $1$ with nonnegative entries, which are synonymous with the probability distributions on $X$.  Then $S$ itself is complete (in the topology induced by the $\ell^1$-norm), being a closed subset of a complete metric space.

In this (and the following) section, let ${\bf p}=(p_i)_{i \geq 1}$ denote a member of $S$ where $p_i>0$ for all $i$.  We shall make use of the characterization of $BS({\bf p})$ supplied by Corollary \ref{corJC} as consisting of those ${\bf q}=(q_i)_{i \geq 1}$ in $S$ such that the $\frac{q_i}{p_i}$ values for $i \geq 1$ are all distinct from one another.

Then $BS({\bf p})$ satisfies the following topological properties.

\begin{theorem}\label{th1}
Let ${\bf p}=(p_i)_{i\geq 1} \in S$, with $p_i>0$ for all $i$.  Then $BS({\bf p})$ in the $\ell^1$-norm topology on $S$ (i) is of second category, (ii) is dense in $S$, and (iii) has an empty interior.
\end{theorem}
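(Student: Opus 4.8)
The plan is to attack the three parts separately, deriving (i) from (ii) via a general principle and handling (ii) and (iii) by direct constructions. For part (i), I would first observe that $BS({\bf p})$ is the complement in $S$ of the set $N({\bf p})$ of distributions ${\bf q}$ for which $q_i/p_i = q_j/p_j$ for some pair $i\neq j$. Writing $N({\bf p}) = \bigcup_{i<j} N_{ij}$ where $N_{ij} = \{{\bf q}\in S: q_i/p_i = q_j/p_j\}$, each $N_{ij}$ is closed in $S$ (it is the intersection of $S$ with a closed hyperplane), and I would argue each $N_{ij}$ is nowhere dense by showing it has empty interior: any ${\bf q}\in N_{ij}$ can be perturbed by shifting a tiny mass $\delta$ from coordinate $i$ to coordinate $j$ to break the equality while staying in $S$ and moving only $2\delta$ in $\ell^1$-norm. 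Hence $N({\bf p})$ is meager, and since $S$ is a complete metric space (stated in the excerpt), $BS({\bf p}) = S\setminus N({\bf p})$ is comeager, so in particular of second category.

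For part (ii), density, I would show that given any ${\bf q}\in S$ and any $\varepsilon>0$ there is ${\bf q}'\in BS({\bf p})$ with $|{\bf q}-{\bf q}'|<\varepsilon$. The natural approach mirrors the perturbation argument in the proof of Theorem \ref{Cardth}: choose $N$ large enough that $\sum_{i>N} q_i < \varepsilon/4$, then adjust the finitely many coordinates $q_1,\dots,q_N$ (and one further coordinate to absorb the change) by amounts summing to less than $\varepsilon/2$ so that all the ratios $q'_i/p_i$ for $i\le N$ become distinct — this is possible because only countably many bad values are excluded at each step from an interval of admissible perturbations — and then further perturb a geometrically decaying amount into the tail coordinates $i>N$ so that those ratios are also distinct and avoid the finitely many values already used. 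Summing a convergent series of small perturbations keeps us within $\varepsilon$ of ${\bf q}$ and inside $S$. Alternatively, since part (i) already shows $BS({\bf p})$ is comeager in the nonempty complete (hence Baire) space $S$, density is automatic; I would likely just remark this and give the explicit construction only if a self-contained argument is wanted.

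For part (iii), empty interior, I need to show $BS({\bf p})$ contains no nonempty open ball, i.e., every ${\bf q}\in BS({\bf p})$ is a limit of points \emph{not} in $BS({\bf p})$. Given ${\bf q}\in BS({\bf p})$, I would take a pair of indices $i<j$ and slide mass between coordinates $i$ and $j$: the function $\delta\mapsto (q_i+\delta)/p_i - (q_j-\delta)/p_j$ is continuous and strictly increasing in $\delta$, so for arbitrarily small $|\delta|$ there is a value making the two ratios equal, producing a point of $N({\bf p})$ within $2|\delta|$ of ${\bf q}$. Hence no point of $BS({\bf p})$ is interior. Equivalently, $N({\bf p})$ is dense (by the same sliding argument applied to any ${\bf q}$), and a set with dense complement has empty interior.

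The main obstacle is the tail-coordinate bookkeeping in the explicit density construction of part (ii): one must simultaneously perturb infinitely many coordinates by a summable amount, keep the result a genuine probability distribution (nonnegative, summing to $1$), and ensure all infinitely many ratios $q'_i/p_i$ are pairwise distinct — including the new tail values being distinct from each other and from the adjusted head values. Managing this requires a careful inductive choice of the perturbations $\delta_i$ with $\sum|\delta_i|<\varepsilon$, at each stage excluding only a countable set of forbidden values from a nondegenerate interval, exactly as in the nonemptiness argument of Theorem \ref{Cardth}. If instead I route density through the Baire category conclusion of part (i), this obstacle evaporates and the proof becomes almost entirely the meagerness computation for $N({\bf p})$, which is the cleanest path and the one I would present.
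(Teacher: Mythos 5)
Your parts (i) and (ii) are fine: (i) is essentially the paper's argument (the same decomposition of the complement into the closed, nowhere dense sets $\{{\bf q}: q_i/p_i=q_j/p_j\}$, followed by Baire), and your shortcut for (ii) --- a comeager subset of the complete, hence Baire, space $S$ has meager complement, which therefore has empty interior, so the set is dense --- is valid and genuinely simpler than the paper's explicit construct-and-normalize argument for density.

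Part (iii), however, has a real gap. For a \emph{fixed} pair $i<j$, the equation $(q_i+\delta)/p_i=(q_j-\delta)/p_j$ has a \emph{unique} solution $\delta_0=(p_iq_j-p_jq_i)/(p_i+p_j)$, and since ${\bf q}\in BS({\bf p})$ this is a fixed nonzero number. Continuity and strict monotonicity of $\delta\mapsto (q_i+\delta)/p_i-(q_j-\delta)/p_j$ give existence and uniqueness of that root; they do not make it small. So your sliding argument, as stated, produces for each pair $(i,j)$ exactly one point of $N({\bf p})$, sitting at the fixed positive distance $2|\delta_0|$ from ${\bf q}$, and nothing forces any of these to be close to ${\bf q}$. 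A decisive sanity check: your argument uses nothing about $X$ being infinite, so it would equally ``prove'' empty interior when $X$ is finite --- but G\&R showed that $BS({\bf p})$ is \emph{open} in the finite case, so the argument must be missing something. The missing idea is to let the pair vary: fix $i=1$ and send $j=n\to\infty$. Then $|\delta_0|\leq q_n+q_1p_n/p_1\to 0$ because both $\sum q_n$ and $\sum p_n$ converge; this is precisely the step in the paper's proof where $n$ is chosen so large that $\max\{q_n,p_n/p_1\}<\epsilon$. With that choice your two-coordinate slide does work --- nonnegativity is automatic, since $q_1+\delta_0=p_1(q_1+q_n)/(p_1+p_n)\geq 0$ and $q_n-\delta_0=p_n(q_1+q_n)/(p_1+p_n)\geq 0$ --- and is in fact slightly tidier than the paper's three-coordinate adjustment. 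The same repair is needed for your parenthetical claim that $N({\bf p})$ is dense.
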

\begin{proof}
(i) Given $1 \leq i <j$, let $S_{i,j}$ denote the subset consisting of those members ${\bf q}=(q_i)_{i \geq 1}$ of $S$ such that $\frac{q_i}{p_i}=\frac{q_j}{p_j}$.  Note that $S_{i,j}$ is a closed subset of $S$ having an empty interior, as one can clearly find ${\bf v} \in S-S_{i,j}$ with $|{\bf v}-{\bf q}|<\epsilon$ for any given $\epsilon >0$ and ${\bf q}$ (upon perturbing the $q_i$ and $q_j$ entries of ${\bf q}$ slightly).  Since the closure of $S_{i,j}$ has empty interior, it is nowhere dense for each $i$ and $j$ and thus
$$BS({\bf p})^c=\bigcup_{1\leq i<j}S_{i,j}$$
is a countable union of nowhere dense sets. By definition, $BS({\bf p})^c$ is of first category.  Now $S$ complete implies it is of the second category, by the Baire category theorem, see Friedman (1982, p.~106).  But then $S=BS({\bf p})\cup BS({\bf p})^c$ implies $BS({\bf p})$ must be of the second category, for otherwise $BS({\bf p})\cup BS({\bf p})^c$ would be of first category being a countable union of nowhere dense sets. \medskip

(ii) Let ${\bf q} \in S$.  We will find a member of $BS({\bf p})$ whose $\ell_1$-distance from ${\bf q}$ is arbitrarily small. Let $0<\epsilon<\frac{1}{2}$ be given.  Define a sequence ${\bf r}=(r_1,r_2,\ldots)$ of nonnegative real numbers as follows.  Let $r_1$ be given and for $n>1$, let $r_n \geq 0$ be such that $\frac{r_n}{p_n}$ is distinct from all values of $\frac{r_i}{p_i}$ for $1 \leq i \leq n-1$.  Further, we may assume $|r_n-q_n|<\frac{\epsilon}{2^n}$ for all $n \geq 1$.  Then ${\bf r}$ is a convergent series with
$$|{\bf r}-{\bf q}|<\sum_{n\geq 1}\frac{\epsilon}{2^n}=\epsilon,$$
and hence $||{\bf q}|-|{\bf r}||<\epsilon$ implies $\frac{1}{2}<1-\epsilon<|{\bf r}|<1+\epsilon$.  Let $r_n'=\frac{r_n}{|{\bf r}|}$ and ${\bf r'}=(r_n')_{n\geq 1}$.  Then ${\bf r'} \in BS({\bf p})$ and
\begin{align*}
|{\bf r'}-{\bf q}|&=\frac{1}{|{\bf r}|}|{\bf r}-|{\bf r}|{\bf q}|\leq 2|{\bf r}-|\bf{r}|{\bf q}| \\
&\leq2\left(|{\bf r}-{\bf q}|+|1-|\bf {r}||\cdot|{\bf q}|\right)<2(\epsilon+\epsilon\cdot1)=4\epsilon,
\end{align*}
whence $BS({\bf p})$ is dense in $S$. \medskip

(iii) Let ${\bf q}=(q_i)_{i \geq 1} \in BS({\bf p})$.  We will find ${\bf q'}=(q_i')_{i\geq1}\in S-BS({\bf p})$ such that $|{\bf q}-{\bf q'}|$ is arbitrarily small.  Let $0<\epsilon<q_2$, where we assume for now $q_2>0$.  Let $n \geq 1$ be large enough so that $\max\{q_i,\frac{p_i}{p_1}\}<\epsilon$ for all $i \geq n$.  First assume $\frac{q_n}{p_n}-\frac{q_1}{p_1}>0$ and let $\delta=q_n-\frac{p_nq_1}{p_1}>0$.  Let the $(q_i')_{i \geq 1}$ be given by $q_i'=q_i$ if $i\neq n,n+1$, with $q_n'=\frac{q_1p_n}{p_1}$ and $q_{n+1}'=q_{n+1}+\delta$.  One may verify ${\bf q'}\in S-BS({\bf p})$.  Then $\frac{p_nq_1}{p_1} \leq \epsilon q_1<\epsilon$ implies $|q_n-q_n'|<\epsilon$, being the difference of two nonnegative real numbers less than $\epsilon$, and further $|q_{n+1}-q_{n+1}'|=\delta<\epsilon$.  Thus, we get $|{\bf q}-{\bf q'}|=|q_n-q_n'|+|q_{n+1}-q_{n+1}'|<2\epsilon$.

Now assume $\frac{q_n}{p_n}-\frac{q_1}{p_1}<0$.  Let $\rho=\frac{p_nq_1}{p_1}-q_n>0$ and note $\rho \leq \frac{p_nq_1}{p_1}\leq \epsilon q_1<q_2$.  Define the $q_i'$ in this case by $q_i'=q_i$ if $i \neq 2,n$, with $q_2'=q_2-\rho$ and $q_n'=\frac{p_nq_1}{p_1}$ (note $n>2$ since $\epsilon<q_2$, by assumption).  Then we have $|q_2-q_2'|,|q_n-q_n'|<\epsilon$ and thus $|{\bf q}-{\bf q'}|<2\epsilon$.  On the other hand, if $q_2=0$, then ${\bf q} \in BS({\bf p})$ implies $q_3>0$, and one may proceed similarly as before with $q_3$ in place of $q_2$, upon requiring $0<\epsilon<q_3$.  This implies in all cases that ${\bf q}$ cannot be an interior point of $BS({\bf p})$, whence $BS({\bf p})$ has an empty interior.
\end{proof}

The preceding result may be extended as follows.

\begin{theorem}\label{th2}
The blind spot $BS({\bf P})$, where ${\bf P}$ is a nonempty countable subset of $S$, satisfies the same conditions (i)--(iii) given above with respect to the $\ell^1$-norm as $BS({\bf p})$.
\end{theorem}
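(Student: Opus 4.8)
The plan is to reduce each of the three assertions to the corresponding part of Theorem~\ref{th1}, together with the elementary identities
$$BS({\bf P})^c=\bigcup_{k\ge 1}BS({\bf p}^{(k)})^c,\qquad S-BS({\bf P})=\bigcup_{k\ge 1}\bigl(S-BS({\bf p}^{(k)})\bigr),$$
where we write ${\bf P}=\{{\bf p}^{(k)}\}_{k\ge 1}$ (the case of finite ${\bf P}$ being handled a fortiori). For (i), let $S_{i,j}^{(k)}$ be the set $S_{i,j}$ from the proof of Theorem~\ref{th1}(i) formed from the prior ${\bf p}^{(k)}$; then $BS({\bf P})^c=\bigcup_{k\ge 1}\bigcup_{1\le i<j}S_{i,j}^{(k)}$ is a union over the countable index set $\{(k,i,j)\}$ of nowhere dense subsets of $S$, hence of first category. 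Since $S$ is a complete metric space, the Baire category theorem then forces $BS({\bf P})=S-BS({\bf P})^c$ to be of second category, exactly as in Theorem~\ref{th1}.

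For (iii), observe that $S-BS({\bf P})\supseteq S-BS({\bf p}^{(1)})$, so it suffices, given ${\bf q}\in BS({\bf P})$ and $\epsilon>0$, to produce ${\bf q'}\in S-BS({\bf p}^{(1)})$ with $|{\bf q}-{\bf q'}|$ arbitrarily small. Since ${\bf q}\in BS({\bf P})\subseteq BS({\bf p}^{(1)})$ (so that at most one coordinate of ${\bf q}$ vanishes), this is precisely what the proof of Theorem~\ref{th1}(iii) delivers when applied to the single prior ${\bf p}^{(1)}$; no modification is required, and we conclude that $BS({\bf P})$ has empty interior.

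For (ii), the density argument of Theorem~\ref{th1}(ii) must be upgraded using the diagonalization from the proof of Theorem~\ref{Cardth}. Given ${\bf q}\in S$ and $0<\epsilon<\frac12$, I would build ${\bf r}=(r_n)_{n\ge 1}$ recursively: with $r_1,\dots,r_{n-1}$ already chosen, pick $r_n\ge 0$ with $|r_n-q_n|<\epsilon/2^n$ and with $\frac{r_n}{p_n^{(k)}}\ne\frac{r_i}{p_i^{(k)}}$ for all $1\le i\le n-1$ and all $k\ge 1$. At step $n$ only countably many values of $r_n$ are excluded (finitely many $i$, countably many $k$), whereas the admissible interval around $q_n$ is uncountable, so such an $r_n$ exists. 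Then $\sum_n r_n$ converges, $|{\bf r}-{\bf q}|<\epsilon$ forces $\frac12<|{\bf r}|<\frac32$, and normalizing ${\bf r'}=(r_n/|{\bf r}|)_{n\ge 1}$ preserves all the distinctness relations, so ${\bf r'}\in BS({\bf p}^{(k)})$ for every $k$, i.e. ${\bf r'}\in BS({\bf P})$; finally $|{\bf r'}-{\bf q}|<4\epsilon$ by the same estimate as in the proof of Theorem~\ref{th1}(ii).

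The main obstacle is really just the bookkeeping in (ii): one must verify that passing from one prior to countably many still leaves only countably many forbidden values of $r_n$ at each stage, which is immediate since a countable union of finite (or countable) sets is countable. Parts (i) and (iii) need no genuinely new idea beyond the set-theoretic reductions above.
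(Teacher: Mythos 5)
Your proposal is correct and follows essentially the same route as the paper's proof: (i) via the countable union of the nowhere dense sets $S_{i,j}^{(k)}$, (ii) by rerunning the recursive construction while excluding the countably many forbidden ratio values arising from all priors at once, and (iii) by monotonicity of the interior under the inclusion $BS({\bf P})\subseteq BS({\bf p}^{(1)})$. You merely spell out details that the paper's proof states tersely.
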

\begin{proof}
We make simple modifications to the preceding proof where required.  For property (i) concerning $BS({\bf P})$, note that the countable intersection of second category sets each of whose complement is of first category is of second category.  For (ii),  observe that the same proof applies when there are a countable number of ${\bf p}$  since for each $n>1$, one has an interval of potential values for $r_n$ wherein at most a denumerably infinite number of values are excluded as possibilities.  Property (iii) follows from the fact that the interior operator respects subset inclusions.
\end{proof}

\emph{Discussion:} Due to (iii), Theorem \ref{th1} demonstrates that the infinite case of $X$ differs fundamentally from the earlier comparable result from G\&R (2021) when $X$ was finite where it was shown that $BS({\bf p})$ is an open dense subset in the set of all probability distributions on $X$.   For a more basic example of a subset $T$ in a metric space $Y$ such that $T$ is dense in $Y$ and of second category, but yet has an empty interior, consider the subset of irrationals within the set of reals.  Further, it is seen that $BS({\bf p})$ is neither open nor closed in the infinite case, as both $BS({\bf p})$ and $BS({\bf p})^c$ have empty interior.

Recall that the $\ell^p$-norm for each $p \geq 1$ is defined for ${\bf s}=(s_i)_{i\geq1}$ as $\left(\sum_{i\geq 1}|s_i|^p\right)^{\frac{1}{p}}$, with the limiting case as $p \rightarrow \infty$ denoted by $\ell^\infty$ given by $\max\{|s_i|:i\geq1\}$.  Since the topology of the $\ell^p$-norm strictly refines that of the $\ell^q$-norm for $1 \leq p<q \leq \infty$, with the $\ell^p$-metric complete for each $p \geq 1$, the prior two theorems apply also to all of the $\ell^p$-norms on $S$.  Further, the results are seen to apply to the complete metric $d$ defined by $d({\bf u},{\bf v})=\frac{|{\bf u}-{\bf v}|}{1+|{\bf u}-{\bf v}|}$ for ${\bf u},{\bf v}\in S$, where $|\cdots|$ denotes an $\ell^p$-norm.  Note that since the topology on $S$ induced by $d$ is bounded, it is seen not to be equivalent to the topology induced by any of the $\ell^p$-norms.  It would be interesting to prove analogues of Theorems \ref{th1} and \ref{th2} for other kinds of topologies on $S$ by considering a greater variety of metrics on $S$.

Finally, the result in (iii) from Theorem \ref{th1} can be generalized in another way as follows.  Consider the subset $S_\ell$ of $S$ where $\ell \geq 1$ consisting of those distributions ${\bf q}=(q_i)_{i\geq1}$ in which there are at least $\ell$ pairs $(n,m)$ where $1 \leq n <m$ such that $\frac{q_n}{p_n}=\frac{q_m}{p_m}$.  Then the set $S_\ell$ is dense in $S$ for each $\ell\geq 1$, the details of the proof we leave to the reader. Note that the $\ell=1$ case corresponds to (iii). \medskip

\emph{Open question:} To what degree can the results in Theorems \ref{th1} and \ref{th2} be extended beyond the class of topologies on $S$ corresponding to the $\ell^p$-norms?  In particular, is it possible to find general sufficient (more desirably, necessary and sufficient) conditions for a complete metric $d$ on $S$ which would ensure $BS(\bf{p})$ is of second category in $(S,d)$ for all ${\bf p}$?

\section{A measure-theoretic analysis of $BS(p)$}

Let $S$ denote the set of all probability distributions on a countably infinite set $X$ whose associated mass functions are represented sequentially.

\begin{theorem}\label{measureth}
There exists a probability measure on $S$ such that $BS({\bf p})$ has probability $1$ with respect to this measure for each ${\bf p}=(p_i)_{i\geq1} \in S$ having all positive components.
\end{theorem}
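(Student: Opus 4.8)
The plan is to exhibit the measure concretely as the law of a randomized probability distribution on $X$, chosen so that every pair of ratios collides with probability $0$. On some probability space $(\Omega,\mathcal{F},P)$, let $(Y_i)_{i\geq 1}$ be independent random variables with $Y_i$ uniformly distributed on the interval $(0,2^{-i})$. Since $0<Y_i<2^{-i}$ surely, the sum $T:=\sum_{i\geq 1}Y_i$ satisfies $0<T\leq 1$ surely, so we may define a random sequence $\mathbf{Q}=(Q_i)_{i\geq 1}$ by $Q_i:=Y_i/T$. Then $\mathbf{Q}$ takes values in $S$ (indeed in the subset of $S$ with all entries positive), and each $Q_i$ is a measurable function of $(Y_j)_{j\geq 1}$; since a map into $S\subseteq\mathbb{R}^{\mathbb{P}}$ is Borel measurable precisely when each coordinate is, and the Borel $\sigma$-algebra of the separable metric space $S$ coincides with the product $\sigma$-algebra inherited from $\mathbb{R}^{\mathbb{P}}$, the map $\mathbf{Q}\colon\Omega\to S$ is measurable. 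Let $\mu$ be the distribution (pushforward) of $\mathbf{Q}$ on $S$; this is the candidate measure, and its definition involves no particular $\mathbf{p}$.

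Next, by Corollary \ref{corJC}, a distribution $\mathbf{q}$ fails to lie in $BS(\mathbf{p})$ exactly when two of the ratios $q_i/p_i$ agree, so
$$BS(\mathbf{p})^c=\bigcup_{1\leq i<j}S_{i,j},\qquad S_{i,j}:=\{\mathbf{q}\in S:\ p_jq_i=p_iq_j\},$$
and each $S_{i,j}$, being the trace on $S$ of a closed hyperplane, is Borel. By countable subadditivity it suffices to prove $\mu(S_{i,j})=0$ for every pair $i<j$. The key observation is that normalization cancels: $Q_i/p_i=Q_j/p_j$ holds if and only if $Y_i/p_i=Y_j/p_j$, i.e. if and only if $p_jY_i-p_iY_j=0$. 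Since $Y_i$ and $Y_j$ are independent and each is atomless, conditioning on $Y_j$ gives
$$\mu(S_{i,j})=P(p_jY_i=p_iY_j)=\int P\!\left(Y_i=\tfrac{p_i}{p_j}\,y\right)dP_{Y_j}(y)=0,$$
because $Y_i$ assigns zero mass to any single point. Hence $\mu\big(BS(\mathbf{p})^c\big)=0$, so $\mu\big(BS(\mathbf{p})\big)=1$, and since the argument used nothing about $\mathbf{p}$ beyond positivity of its components, this holds simultaneously for all such $\mathbf{p}$.

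I do not anticipate a serious obstacle: the statement is essentially a ``generic position'' assertion made rigorous, and the heart of the matter is simply choosing the randomization so that the ratio condition for a given pair depends only on two independent atomless coordinates. The only points needing a little care are (a) confirming that $\mathbf{Q}$ is a genuine measurable map into $S$, so that $\mu$ is a bona fide Borel probability measure on $S$ — which rests on coordinatewise measurability together with the coincidence of the product and $\ell^1$-norm Borel structures on $S$ — and (b) the elementary fact that $P(U+V=c)=0$ whenever $U$ is atomless and $V$ is independent of $U$. The uniform choice of the $Y_i$ is only for transparency; independent exponentials, or a stick-breaking (GEM-type) construction, would serve equally well. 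Finally, in keeping with the theme of the paper, the very same $\mu$ assigns probability $1$ to $BS(\mathbf{P})=\bigcap_{\mathbf{p}\in\mathbf{P}}BS(\mathbf{p})$ for any countable family $\mathbf{P}$ of positive distributions on $X$, since a countable intersection of $\mu$-conull sets is $\mu$-conull; I would record this as a corollary.
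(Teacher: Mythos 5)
Your proof is correct, and while it shares the paper's overall strategy---realize a Borel probability measure on $S$ as the law of a random sequence with continuous marginals, then kill each pairwise ratio-collision event and sum over the countably many pairs---your construction and the key lemma behind the null-set computation are genuinely different. The paper builds the random distribution by stick-breaking: $X_1$ is uniform on $[0,1)$ and each $X_i$ is conditionally uniform on $[0,1-\sum_{j<i}x_j)$, so the coordinates are dependent and the sum converges to $1$ automatically; the event $\{X_a = (p_a/p_b)X_b\}$ is then shown to be null by a conditioning argument asserting that $X_a$ has a conditional density given $\sum_{i<a}X_i$ and $X_b$. You instead take \emph{independent} $Y_i$ uniform on $(0,2^{-i})$ and normalize by $T=\sum Y_i$; the decisive observation that $Q_i/p_i = Q_j/p_j$ if and only if $p_jY_i = p_iY_j$ (the normalizing factor cancels) reduces the whole computation to the elementary fact that two independent atomless variables satisfy a fixed linear relation with probability zero, via a one-line Fubini argument. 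This buys a cleaner and more easily verified measure-zero step---the paper's appeal to the existence of a conditional density is stated rather than proved---at the cost of producing a measure supported on distributions with $q_i \leq 2^{-i}/T$, i.e.\ with rapidly decaying tails, whereas the stick-breaking measure is arguably a more ``canonical'' randomization of $S$. Your attention to the measurability of $\mathbf{Q}$ (coincidence of the product and $\ell^1$-Borel structures on $S$) is a point the paper passes over silently, and your closing remark about $BS(\mathbf{P})$ for countable $\mathbf{P}$ matches the paper's own discussion following the theorem.
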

\begin{proof}
Let $X_1$ be a uniform random variable on the interval $[0,1)$.  Define the random variables $X_i$ for $i>1$ recursively by letting $X_i$ be uniform on the interval $[0,1-\sum_{j=1}^{i-1}x_j)$, where $X_j=x_j$ for $1 \leq j \leq i-1$.  Then $X_1+X_2+\cdots \rightarrow 1$ almost surely, and we consider the set of possible outcomes $(x_1,x_2,\ldots)$ wherein $X_i=x_i$ for all $i$, which are synonymous with the members of the set $S$.  Note that outcomes $(x_1,x_2,\ldots)$ which are finitely nonzero have probability zero of occurring and hence the subset comprising the corresponding members of $S$ has measure zero.

Given integers $1 \leq a <b$, consider $P(X_a=cX_b)$, where $c=c_{a,b}$ is the constant defined by $c=\frac{p_a}{p_b}$.  Then we have

\begin{align*}
P(X_a=cX_b)=\int\int &P(X_a=ct|\sum_{i=1}^{a-1}X_i=s \text{ and } X_b=t)\\
&\quad\times(\text{Joint density of } \sum_{i=1}^{a-1}X_i \text{ and } X_b \text{ evaluated at } s \text{ and } t) dsdt.
\end{align*}

Note that the probability $P(X_a=ct|...)$ in the prior integral is zero since for each $s$ and $t$, it can be shown that $X_a$ has a (conditional) density, whence $P(X_a=cX_b)=0$.  Considering all pairs $(a,b)$ where $a<b$ (a countable number of possibilities) implies the probability that some $(x_1,x_2,\ldots)$ is not in $BS({\bf p})$ is zero. Thus, $BS({\bf p})$ has probability $1$ in this measure on $S$.
\end{proof}

\emph{Discussion:}  Since $BS({\bf p})^c$ has probability $0$ with respect to the measure $M$ defined on $S$ in the preceding proof, then so does $BS({\bf P})^c$, where ${\bf P}$ consists of a countable number of distributions $\bf{p}$.  Hence, $BS({\bf P})$ has probability $1$ with respect to $M$.

Note that there is apparently not a straightforward extension to the infinite case of the argument from G\&R (2021) in the finite case of $X$ which demonstrated that the complement of $BS({\bf p})$ has measure zero with respect to the Lebesgue measure on $\mathbb{R}^{n-1}$, where $n=|X|$.  This is due in part to the fact that there is apparently not an analogous measure on $\mathbb{R}^\infty$ that permits a comparable analysis.  It should be remarked that the proof of Theorem \ref{measureth} can be extended to the finite case of $X$ by setting $X_n=1-\sum_{i=1}^{n-1}x_i$ and terminating the recursive procedure of defining random variables.  Note however that the resulting measure for the probability distributions on $X$ differs from that supplied by (normalized) Lebesgue measure on $\mathbb{R}^{n-1}$.

Further, different measures on $S$ for which $BS(\bf{p})$ has probability $1$ can be obtained by allowing the $X_i$ to assume other continuous distributions on a finite interval.  For example, one could let $X_1$ be a $\beta$ distribution on $[0,1)$ and then subsequently define the $X_i$ for $i>1$ as appropriate scaled versions of the $X_1$ distribution on intervals of decreasing length.  Note that it is not a requirement that the $X_i$ all have the same kind of distribution, provided they are continuous and $\sum_{i\geq 1}X_i$ converges to $1$ almost surely. \medskip

\emph{Open Question:} Is it possible to find a general criterion for a probability measure on $S$ to ensure that $BS(\bf{p})$ has probability $1$ for all possible $\bf{p}$?

\section{Summary and conclusion}

We initiated this project to determine if Jeffrey conditioning on partitions of a denumerably infinite set might offer a degree of flexibility sufficient to ameliorate the limitative results established by G\&R in the finite case. On a superficial level, the answer turns out to be negative.  For the Bayes blind spot of a prior distribution on a denumerably infinite set $X$ has cardinality $c$, is of second Baire category in the set $S$ of all probability distributions on $X$ for a natural topology on $S$, and has measure 1 for an appropriately defined measure on $S$.  Our proof that $|BS({\bf p})|=c$ holds for all countable $X$ and, in the case when $X$ is finite, differs from, though it is in the same spirit as, the proof given by G\&R.  Further, the proof of G\&R may be extended to all countable $X$, and applies also to show $|BS({\bf P})|=c$ in the finite, though not the infinite, case of ${\bf P}$.  However, our approach to the measure-theoretic and topological sizes of $BS(\bf{p})$ when $X$ is denumerably infinite differs more dramatically from that employed by G\&R in the finite case, as detailed below.

The difference is particularly striking in the topological analysis, where we show that $BS(\bf{p})$ is (i) of second category, (ii) is dense in $S$, and (iii) \emph{has an empty interior}, with the topology on $S$ derived from its $\ell^1$-norm (or any $\ell^p$-norm, where $1 \leq p \leq \infty$).  Indeed, it is precisely the flexibility afforded by probabilities defined on a denumerably infinite set that allows us, for any ${\bf q} \in BS({\bf p})$, to find a member of $S-BS({\bf p})$ whose $\ell^1$-distance from ${\bf q}$ is arbitrarily small.  So this topology does not reduce to the topology employed by G\&R in the finite case, where $BS({\bf p})$ was shown to be an \emph{open}, dense subset of $S$.  If, as we suspect, separate treatments of the finite and denumerably infinite cases are unavoidable here, this represents a somewhat unusual situation in mathematics, where one typically expects a unified treatment under the rubric of countability of the relevant underlying sets.

In our measure-theoretic analysis, we constructed a measure $M$ on the set of probability distributions on the denumerably infinite set $X$ such that $M(BS({\bf p}))=1$.  Moreover, a slight variation on this construction when $X$ is finite produces a measure $m$ such that $m(BS({\bf p}))=1$.  So we have here a more or less unified treatment for all countable sets $X$.  This measure $m$ differs, however, from the (normalized) Lebesgue measure employed by G\&R in their approach to the finite case.

It is particularly noteworthy that if ${\bf P}$ is any countable family of priors on a denumerably infinite set and $BS({\bf P}):=\bigcap_{{\bf p} \in {\bf P}}BS({\bf p})$, then $BS({\bf P})$ continues to have the topological and measure-theoretic sizes of the Bayes blind spot of a single prior, as described in the preceding two paragraphs, and has the same cardinality as well.  In particular, (1) $|BS({\bf P})|=c$, (2) $BS({\bf P})$ is of second category, is dense in $S$, and has empty interior for the topology on $S$ derived from its $\ell^1$-norm, and (3) $M(BS({\bf P}))=1$.  This raises an intriguing question, which we have been unable to resolve, and with which we conclude this paper:

Do these results strengthen the observation of G\&R that any prior, however chosen, puts severe limitations on Bayesian learning originating in that prior (after all, the intersection of the Bayes blind spots of countably many Bayesian agents, each with a different prior, and each employing Jeffrey conditioning, remains topologically and measure-theoretically large)?  Or do they suggest that \emph{Bayesian learning itself} might be equally culpable in allowing the existence of large Bayes blind spots (even with access to countably many priors, the union of all the probabilities accessible from these priors by Jeffrey conditioning remains topologically and measure-theoretically small)?

\section{Notes}

1. Actually, Gyenis and R\'{e}dei treat the case in which this algebra is $2^X$, where $X$ is a finite set.  This involves no loss of generality, since any finite algebra $\bf{A}$ of subsets of an arbitrary set is isomorphic to $2^X$ for some finite set $X$, namely, the set of atoms of $\bf{A}$ (R\'{e}nyi, 1970, Theorem 1.6.1).  It may be worth recalling here that a finite algebra of subsets of an arbitrary set is \emph{ipso facto} a sigma algebra, since every countable union of such subsets is equal to some finite union of those subsets.  Similarly, every finitely additive probability measure on a finite algebra is countably additive since, in every infinite sequence of pairwise disjoint sets from that algebra, all but finitely many are equal to the empty set. \medskip

2. Gyenis and  R\'{e}dei (2021, section 5.1) also analyze the Bayes blind spot from the perspective of repeated applications of Jeffrey conditioning. \medskip

3. In fact, we are able, with no loss of generality, to restrict consideration to probability measures defined on $2^Y$, where $Y$ is denumerably infinite.  See R\'{e}nyi (1970, Theorem 1.6.2). \medskip

4. This simplifying assumption is made to avoid continually having to specify in various definitions and theorems that certain probabilities are nonzero.  Recall that if probabilities are construed as the (linear utility) prices one is willing to pay for certain bets, then confirming your probabilities with the axioms of finitely additive probability (so-called \emph{coherent} probabilities) protects you against accepting a finite sequence of bets on which you are sure to sustain a net loss.  It is commonly believed that probability measures must be strictly coherent in order to avoid accepting bets on which a net gain is impossible, but a net loss is possible (a so-called \emph{weak Dutch book}). But see Wagner (2007) regarding a slightly modified conception of subjective probability in which mere additivity suffices not only to protect against sure loss, but also against vulnerability to a weak Dutch book.  \medskip

5. If ${\bf E}$ and ${\bf F}$ are partitions of a set $X$, then ${\bf F}$ is \emph{coarser than} ${\bf E}$ if, for every $E \in {\bf E}$, there exists an $F \in {\bf F}$ such that $E\subseteq F$ (equivalently, if every $F \in {\bf F}$ is a union of members of ${\bf E}$). \medskip

\end{document}